\newtheorem{lemma}{Lemma}
    \newcommand{\midsepremove}{\aboverulesep = 0mm \belowrulesep = 0mm}
    \newcommand{\midsepdefault}{\aboverulesep = 0.605mm \belowrulesep = 0.984mm}
\title{A Hybrid Framework Using a QUBO Solver For Permutation-Based Combinatorial Optimization}
\author{Siong Thye Goh}
\author{Sabrish Gopalakrishnan}
\author{Jianyuan Bo}
\author{Hoong Chuin Lau}
\affil{School of Computing and Information Systems, Singapore Management University, Singapore}
\begin{document}

\maketitle

\begin{abstract}
Hardware limitation imposes a challenge to apply quadratic unconstrained binary optimization (QUBO) / Ising model solvers directly to solve large combinatorial optimization problem instances.
In this paper, we propose a hybrid framework to solve large-scale permutation-based combinatorial problems effectively using a QUBO solver efficiently and effectively.  
To achieve this, several issues need to be addressed. First, we convert a constrained optimization model into an unconstrained model by introducing a penalty coefficient. Second, to ensure effective search of good quality solutions, a smooth energy landscape is needed; we propose a data scaling approach that reduces the amplitudes of the input without compromising optimality.  Third, parameter tuning is needed, and in this paper, we illustrate that for certain problems, it suffices to perform random sampling on the penalty coefficients to achieve good performance.  Fourth, we address the challenges of using a QUBO solver that may only work on small scale problems due to hardware limitation and also due to general performance deterioration of a QUBO solver as the size grows; we introduce a decomposition approach that calls the QUBO solver repeatedly on small sub-problems.  
Finally, to handle the possible infeasibility of the QUBO solution, we introduce a polynomial-time projection algorithm. We show our performance on the Traveling Salesman Problem and Flow Shop Scheduling Problem. We empirically compare the performance of our approach with solving the optimization problems directly.  We also compare the performance with an exact solver.
\end{abstract}

\section{Introduction}

Combinatorial optimization problems such as the Traveling Salesman Problem (TSP) and Flow Shop Scheduling Problem (FSP) 
are computationally intractable. Traditionally these problems are solved by modeling them as an integer program and solved with mathematical programming solvers such as CPLEX and Gurobi. These solvers are based on the branch and bound paradigm which are exponential time algorithms. 

Quantum computing offers a new approach to problem-solving. One exciting development is quantum annealing, which offers a general framework for solving combinatorial optimization problems efficiently and (provably) optimally, leveraging on quantum concepts like superposition and quantum tunneling (that enables a solution to escape from local optimality). 
While Quantum Annealing in its true form is still in the nascent stage of development, special quantum hardware such as D-Wave's quantum annealer has been developed that demonstrates effectiveness in solving targeted problems like Max-SAT and Max Cut (see e.g. \cite{venegas2018cross}).   Nurse-scheduling problem and flight gate assignment problem has been solved in \cite{ikeda2019application} and \cite{Flightannealing} respectively. In Quantum Annealing, a given combinatorial optimization problem is first formulated as an Ising model (or equivalently, quadratic unconstrained binary optimization problems (QUBO) \cite{lucas2014ising} and then solved using such annealing machine. Currently, these solvers assume that the given problem is unconstrained, and it has been proven in \cite{lucas2014ising} that a constrained problem such as those stated in a typical mixed integer programming (MIP) can be converted to an unconstrained problem using a penalty method. Such details will be covered in the background section.

Recently, we witness the development of QUBO solvers such as Alpha-QUBO \cite{glover2018tutorial} and Fujitsu's Digital Annealer (DA) \cite{aramon2019physics}, which can be executed on conventional machines without relying on quantum phenomena. Unlike D-Wave that requires the problem to be embedded in a Chimera graph (\cite{date2019efficiently} or (more recently) PEGASUS graph \cite{boothby2020next}, DA has the advantage that it works with a fully connected graph; in \cite{hamerly2018experimental}, the scaling advantage of all-to-all connectivity was discussed. 
DA is implemented as a special (but still CMOS-based) hardware processor that enables parallel exploration of the neighborhood search space.  It is based on a bit-flip simulated annealing algorithm with parallel tempering that does not require the user to specify the cooling schedule. Rather, the high and low temperatures are fixed and intermediate temperatures are adjusted with the objective of achieving an equal replica-exchange for all adjacent temperatures. It also includes a ``dynamic offset" mechanism that enables the algorithm to escape from local optimality \cite{aramon2019physics}. 

Unfortunately, all the above technologies suffer a limitation on the size of the model that can be solved directly, e.g. a limit of $2000$-qubits for D-Wave, $3000$ binary variables for Alpha-QUBO, and $8192$ binary variables for DA.
Even with prospective hardware enhancements, it is challenging to cope with large-scale combinatorial optimization problems as problem size increases. Hence, to date, it is commonly agreed that hybrid methods are needed. For instance, in \cite{liu2019modeling}, \cite{tran2016hybrid}, \cite{tran2016explorations}, and  \cite{venturelli2016job}, hybrid quantum-classical approaches to solving scheduling problems have been proposed. Furthermore, even if our problem size can fit into the solver input size, we show that divide and conquer can help us obtain better solution.

In this paper, we focus on permutation-based combinatorial optimization problems, i.e. problems whose constraints are permutation constraints. For a routing problem, this corresponds to deciding the order of nodes to visit; for a scheduling problem, this corresponds to deciding the order of tasks to be served by a machine.

It is known that these problems have dedicated solvers: for TSP,  Concorde has been designed to specifically solve it while for FSP, there are standard heuristics such as the NEH algorithm \cite{nawaz1983heuristic}. More complex neighborhoods for simulated annealing have also been proposed in  \cite{tian1999application}. 
Our goal is not to compete with these solvers nor specialized algorithms. Rather, we like to illustrate that a general QUBO solver that is based on simulated annealing with a simple bit-flip neighborhood can be exploited to solve such a problem effectively when incorporated within an algorithm framework that we propose in this paper. By doing so, we demonstrate that a classical generic SA-based QUBO solver can be exploited to solve large-scale permutation-based optimization problems efficiently on conventional computers. While this idea on its own is not novel, if we were to extrapolate this idea, we point the way forward that someday when a quantum annealing machine is more viable commercially, our proposed methodology will provide the backbone for solving large-scale combinatorial optimization problems with exponential speedup, thereby providing a strong competitor to commercial branch-and-bound-based exact solvers like CPLEX and Gurobi. 

With this backdrop (which is still some distance away), our contributions  are as follows: 
\begin{enumerate}
    \item We propose an algorithmic framework that decomposes a large problem instance into instances of a manageable size to achieve good performance rather than solving the QUBO directly.
    \item Under this framework, we propose a data scaling method to convert an instance to one with smaller cost variations while preserving the ranking of solutions for the original problem instance for QUBO with permutation constraint.
    We compare various approaches to tune the parameters for the smaller QUBO models. 
    We propose a method to project infeasible solutions obtained by the QUBO solver to feasible solutions using a polynomial-time weighted assignment algorithm.
   \item We apply our approach on Euclidean TSP  (E-TSP) and FSP instances and empirically evaluate our approach against different experimental settings. We show that we perform better than an exact solver for these problems.
\end{enumerate}

A noteworthy point is that there could be many engineering choices for performing divide and conquer when applied to specific permutation-based combinatorial optimization problems. In this paper, our goal is not to prescribe a one-size-fits-all algorithm for all such problems. Rather, we illustrate that it is quite straightforward to design a decomposition approach to deal with large scale problem instances.

\section{Background}

A general QUBO formulation can be written as follows:

\begin{equation}\min_{x \in \{0,1\}^n} x^TQx \label{qubo1}\end{equation}

\noindent where $Q$ is an $n \times n$ matrix and $x$ are binary decision variables.

Many combinatorial optimization problems (such as TSP, machine scheduling problems, FSP) can be formulated as quadratic binary optimization models with $m>0$ number of linear constraints:

\begin{equation} \min x^TQ_0x \end{equation}

\noindent subject to $c_i^Tx+d_i=0, \forall i \in \{1, \ldots, m\},$ which may be written as:

\begin{equation}\min x^TQ_0x \end{equation}

\noindent subject to $\|c_i^Tx+d_i\|^2=0, \forall i \in \{1, \ldots, m\}. $

Applying the penalty method, the constraints may be converted to the following form where $A_i$'s are the penalty terms. They introduce penalties to the objective function when the constraints to the original model are violated. It is known that when $A_i$'s are large enough, it is theoretically equivalent to the original constrained problem: 

\begin{equation} \min x^TQ_0x + \sum_i A_i\|c_i^Tx+d_i\|^2 \end{equation}

\noindent or more generally:

\begin{equation}\min x^TQ_0x + \sum_i A_i(x^TQ_ix+\alpha_i) \label{generalqubo}\end{equation}

\noindent where $Q_i$ is an $n \times n$ matrix and $\alpha_i$ are constants. We require $x^TQ_ix+\alpha_i \ge 0$.

QUBO is closely related to the Ising model in Statistical Mechanics. Rather than dealing with binary variables, the latter works with spins (variables) which take values from $\{-1, 1\}$ and the goal is to reach the minimum state of a Hamiltonian energy function $H$ via a process known as quantum annealing. An Ising model can be directly and linearly transformed into a QUBO model \cite{lucas2014ising}.

As mentioned earlier, theoretically when $A_i$ is sufficiently large, the model is equivalent to the original constrained problem. In practice, however, when a heuristic solver such as DA or Alpha-QUBO is applied to solve a QUBO model, its performance is sensitive to the values of $A_i$, as they determine the landscape of the objective function. Figure \ref{improveeffect} illustrates that parameter tuning can improve the performance of a QUBO solver for E-TSP. This is especially true if the QUBO size is large and if there is no pre-processing. Unfortunately, the tuning process is rarely discussed in the literature in the context of problem model parameters, vis-a-viz automated tuning or configuration of algorithm-specific parameters.  Typically, high-level guidance such as those presented in \cite{lucas2014ising} is used as a norm. Some exceptions are in  \cite{karimi2017subgradient}, where the weight for a single parameter is tuned using a sub-gradient approach to solve the quadratic stable set problem and \cite{huang2021qross} where  QROSS method, in which surrogate models of QUBO solvers are being built to reduce the number of calls to the QUBO solver is being made.

\begin{figure}[htp]
\center
\includegraphics[width=8cm]{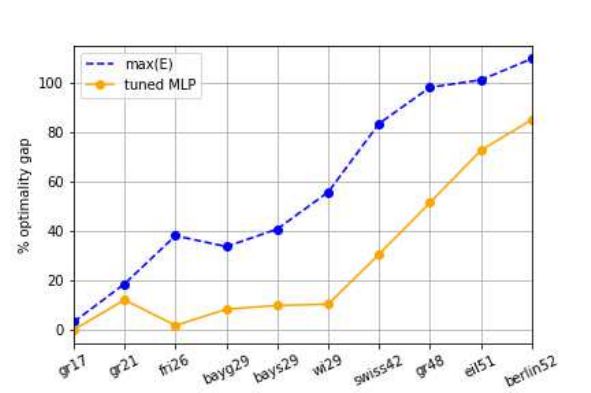}
\caption{Effect of tuning on optimality gap for Traveling Salesman Problem. It is compared to the baseline (in blue) where the penalty coefficient is chosen to be maximum edge length as this is a theoretical bound that suffices to make the QUBO equivalent to the original TSP.}\label{improveeffect}
\end{figure}

\subsection{Permutation-based Problems}
A permutation-based combinatorial optimization problem involves permuting $n$ objects to minimize a certain objective function. Common examples of permutation-based problems include the traveling salesman problem, the permutation flow shop scheduling problem, and the quadratic assignment problem. Such problems can be modeled as minimizing a quadratic objective function of the following form: 
\begin{equation}\min \sum_{i=1}^n\sum_{j=1}^n \sum_{u=1}^n \sum_{v=1}^n x_{u, i}Q_{u,i,v, j}x_{v, j}\label{originalqubo}\end{equation}

\noindent subject to $\sum_{u=1}^n x_{u,i}=1, \forall i \in \{ 1, \ldots, n\}$
and
$\sum_{i=1}^n x_{u,i}=1, \forall u \in \{1, \ldots, n\}$
 where $x_{u,i}$ takes value $1$ if object $u$ is assigned to slot $i$ and it takes value $0$ otherwise. The two constraints ensure that each object is given a slot and vice versa. We call the first group of constraint the column sum constraints and the second group of constraint the row sum constraints.

We can convert this optimization formulation to a QUBO model by squaring the constraint violations and add it to the original objective function:

\begin{align}\min &\sum_{i=1}^n\sum_{j=1}^n \sum_{u=1}^n \sum_{v=1}^n x_{u,i}Q_{u,i, v, j}x_{v,j} \\&+ A \left[ \sum_{u=1}^n \left(\sum_{i=1}^n x_{u,i}-1 \right)^2 +\sum_{i=1}^n \left(\sum_{u=1}^n x_{u,i}-1 \right)^2  \right] \label{tune}\end{align}

\noindent where $A$ is a single parameter that we have to tune to enforce the constraint. Henceforth, we call a QUBO expressed in that form a "permutation QUBO".

For this work, we choose to tune a single parameter $A$ following the formulation given in \cite{lucas2014ising} although we could have assigned a new parameter to each of the $2n$ constraints. By doing so, the number of parameters that we have to tune does not grow as $n$ increases. The symmetry in the permutation constraints suggests that we do not have to emphasize some constraints over the other intuitively.

To enhance the performance of the QUBO solver, we propose the following ideas. 
First, we propose a data scaling procedure to scale the objective function so as to make the objective landscape smoother to improve the search effectiveness. Second, we take advantage of the permutation structure and introduce a method to ensure that the solution obtained from a QUBO solver can be easily projected to a feasible solution by solving a weighted assignment problem (which can be solved in polynomial time). 

\section{Solution Approach}

Given the above motivation, we propose our algorithm framework summarized in Figure \ref{bigpicture} to solve large scale permutation-based combinatorial problems. Details are described in the following sub-sections.

Note that our approach is applicable to any QUBO solver. 
Henceforth, we will use the term \textbf{QS} to denote the QUBO Solver. 

\begin{figure}
\includegraphics[width=9cm]{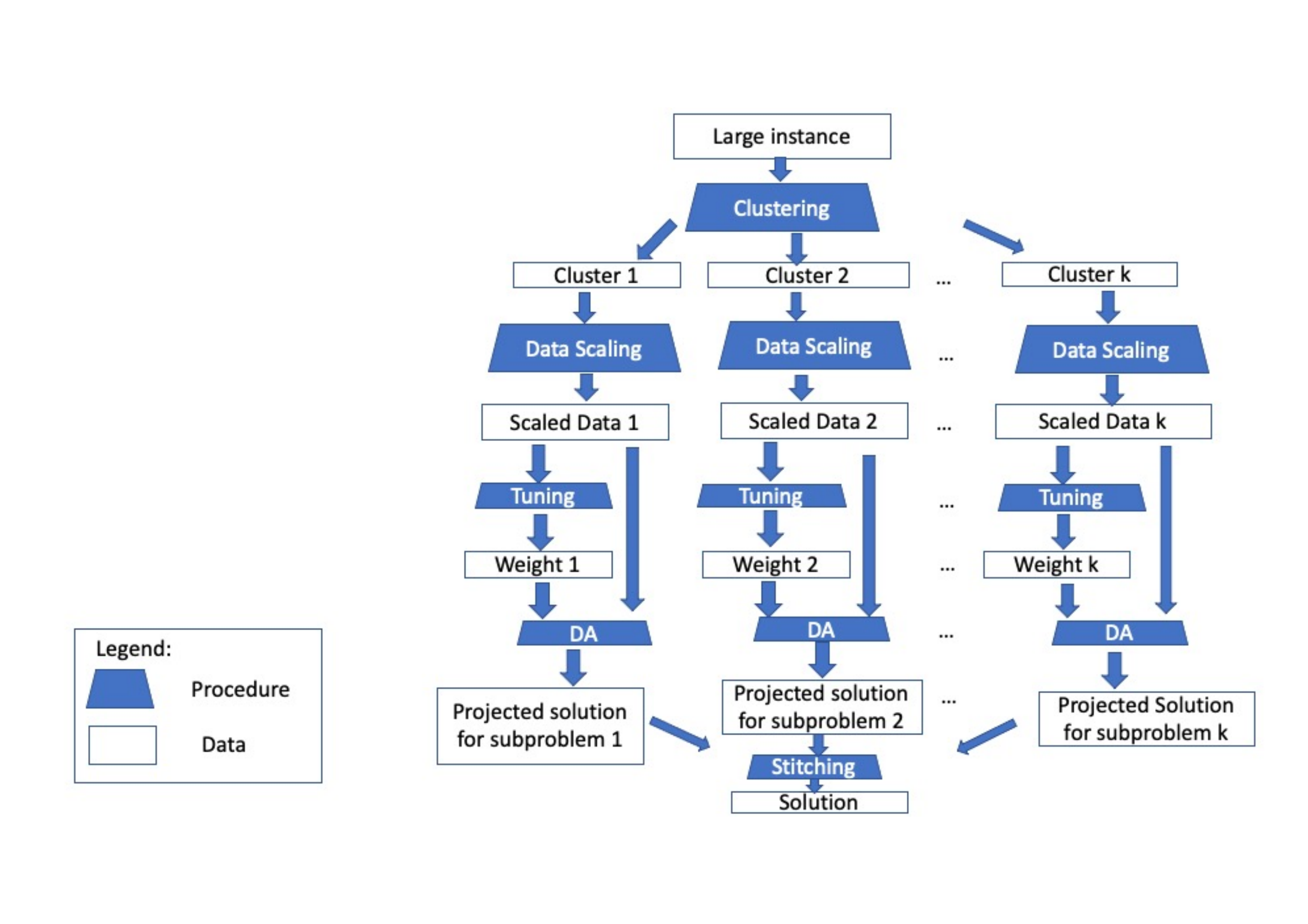}
\caption{Proposed algorithmic framework  }\label{bigpicture}
\end{figure}

\subsection{Clustering}


Although the concept of decomposition through clustering is generic, it can be made to be more effective by taking advantage of the problem structure. For instance, if the objective is to minimize distance, we can make use of efficient procedures such as $k$-means clustering. Otherwise, a more general procedure such as spectral clustering \cite{ng2002spectral} can be used. 

Furthermore, for permutation-based problems, clustering (and subsequent stitching) is natural, since a permutation can be divided into sub-permutations which can be readily combined without the fear of losing feasibility. 

Note that typically the number of clusters formed is a function of the size of the subproblem within each cluster that QS performs well in terms of solution quality and solving time. The number of clusters affects the size of each sub-problem, which should not be too small as that would affect the performance when we merge the solutions of sub-problems to form a solution to the original problem. In this paper, we will use the notation $k$ to denote the number of clusters.


\subsection{Data Scaling for Each Cluster}

In \cite{held1970traveling}, it has been shown that for the TSP problem, one can reduce all the distances to or from a particular city by a constant and preserve the ranking of the solutions.  This result can be extended to the quadratic binary optimization problem with permutation constraints as follows.

Given the optimization problem ($\ref{originalqubo}$), pick any $\hat{j} \in \{1, \ldots, n\}$ and define $\tilde{Q}$ as follows:

$$\forall u, i, v \in \{1, \ldots, n\}, \tilde{Q}_{u, i, v, j} = \begin{cases} Q_{u,i, v, j} & \text{if } j \ne \hat{j} \\ Q_{u,i,v,j} +\Delta & \text{if } j= \hat{j}\end{cases}$$

That is, we change those matrix entries with index $j = \hat{j}$ by a constant $\Delta$. We call this process data scaling.

Consider the resulting scaled optimization problem:

\begin{equation}\label{perturbqubo}\min \sum_{i=1}^{n}\sum_{j=1}^n \sum_{u=1}^n \sum_{v=1}^n x_{u,i}\tilde{Q}_{u,i,j,v}x_{j,v} \end{equation}

\noindent subject to 
$\sum_{u \in I} x_{u,i}=1$ and
$\sum_{i \in J} x_{u,i}=1$.

The following lemma shows that the ranking of optimality is preserved under data scaling: 

\begin{lemma}
A solution $y$ that is better than $z$ for optimization problem $(\ref{originalqubo})$ remains better for optimization problem $(\ref{perturbqubo})$.
\end{lemma}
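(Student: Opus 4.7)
The plan is to show something stronger than the statement: that on every feasible $x$, the scaled objective differs from the original one by a constant that does not depend on $x$. If I can establish this, ranking preservation (and in fact preservation of the complete order on the feasible set) follows at once.

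First, I would compute the difference between the scaled and original objectives for a generic feasible $x$. Since $\tilde{Q}_{u,i,v,j}$ and $Q_{u,i,v,j}$ agree except when $j=\hat{j}$, only the terms with $j=\hat{j}$ survive in the subtraction, giving
\begin{equation*}
\sum_{i,u,v} x_{u,i}\,\tilde{Q}_{u,i,v,\hat{j}}\,x_{v,\hat{j}} - \sum_{i,u,v} x_{u,i}\,Q_{u,i,v,\hat{j}}\,x_{v,\hat{j}} = \Delta \sum_{i=1}^n \sum_{u=1}^n \sum_{v=1}^n x_{u,i}\,x_{v,\hat{j}}.
\end{equation*}
Next I would factor this triple sum as $\Delta\bigl(\sum_{i,u} x_{u,i}\bigr)\bigl(\sum_v x_{v,\hat{j}}\bigr)$ and evaluate both pieces using the permutation constraints: the column-sum constraint at slot $\hat{j}$ forces $\sum_v x_{v,\hat{j}}=1$, and summing the row-sum (or column-sum) constraints over all rows (resp.\ columns) gives $\sum_{i,u} x_{u,i}=n$. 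Hence the difference is exactly $n\Delta$, independent of $x$.

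Finally, I would conclude by noting that for any two feasible solutions $y, z$,
\begin{equation*}
\text{obj}_{\text{scaled}}(y) - \text{obj}_{\text{scaled}}(z) = \text{obj}_{\text{orig}}(y) - \text{obj}_{\text{orig}}(z),
\end{equation*}
so $y$ being strictly better (or no worse) than $z$ under $(\ref{originalqubo})$ transfers verbatim to $(\ref{perturbqubo})$.

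The argument is essentially a bookkeeping exercise; there is no real obstacle beyond being careful with indices and recognizing that the crucial step is to invoke \emph{both} permutation constraints (the column-sum at $\hat{j}$ to collapse the $v$-sum to $1$, and the row-sum constraints aggregated to collapse the $(i,u)$-sum to $n$). One subtlety worth flagging is that the lemma as stated only addresses ranking, but the proof actually yields the stronger conclusion that the objectives differ by a constant on the entire feasible set; this is what justifies iterating data scaling over different choices of $\hat{j}$ to reduce cost amplitudes, as the framework later exploits.
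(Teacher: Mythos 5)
Your proposal is correct and follows essentially the same route as the paper: both isolate the $\Delta$-contribution of the terms with $j=\hat{j}$ and use the two permutation constraints to show it equals a constant ($n\Delta$, or $BC|I|\Delta$ in the paper's more general $B,C$ version) independent of the feasible solution, so objective differences are preserved. The paper phrases this as invariance of the difference $\mathrm{obj}(z)-\mathrm{obj}(y)$ rather than as a constant shift of the objective, but the underlying computation is identical.
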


Note that our theoretical result can be further generalized to constraint of the form of $\sum_{i \in I}x_{i,j}=B$ and $\sum_{i \in I}x_{i,j}=C$ where $B$ and $C$ satisfy $B|J|=C|I|$. We state and prove the more general version of the result here:

Consider the following quadratic programming problem with binary variables.

\begin{equation}\label{originalobj}\min \sum_{\substack{i,u \in I\\ v, j \in J}} x_{i,j}Q_{i,j,u,v}x_{u,v}  \end{equation}

\noindent subject to 
$\sum_{i \in I} x_{i,j}=B$ and
$\sum_{j \in J} x_{i,j}=C$
where $|J|B = |I|C$.
That is we impose the conditions that the row sum is a constant and the column sum is another constant and the problem is feasible.

Let $\hat{j} \in J$.
Consider the optimization problem with perturbed data:

\begin{equation}\label{perturbobj}\min \sum_{\substack{i,u \in I\\ v, j \in J}} x_{i,j}\tilde{Q}_{i,j,u,v}x_{u,v} \end{equation}

\noindent subject to 
$\sum_{i \in I} x_{i,j}=B$ and
$\sum_{j \in J} x_{i,j}=C$.

where

$$\tilde{Q}_{i,j,u,v} = \begin{cases} Q_{i,j,u,v} & \text{if } j \ne \hat{j} \\ Q_{i,j,u,v} - \Delta & {if } j= \hat{j}\end{cases}$$

The following lemma shows that the ranking of optimality is preserved under data scaling: 

\begin{lemma}
A solution $y$ that is better than $z$ for optimization problem $(\ref{originalobj})$ remains better for optimization problem $(\ref{perturbobj})$.
\end{lemma}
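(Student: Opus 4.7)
The plan is to show that $\tilde{f}(x) - f(x)$ is a constant on the feasible set, where $f$ and $\tilde{f}$ denote the objectives of (\ref{originalobj}) and (\ref{perturbobj}) respectively. Since $\tilde{Q}$ differs from $Q$ only in entries with $j = \hat{j}$, the difference telescopes to
\[
\tilde{f}(x) - f(x) \;=\; -\Delta \sum_{i,u \in I,\, v \in J} x_{i,\hat{j}}\, x_{u,v}.
\]
The right-hand side factors cleanly as $-\Delta \bigl(\sum_{i \in I} x_{i,\hat{j}}\bigr)\bigl(\sum_{u \in I,\, v \in J} x_{u,v}\bigr)$, which reduces the problem to evaluating two marginal sums.

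First I would apply the row-sum constraint $\sum_{i \in I} x_{i,j} = B$ at $j = \hat{j}$ to get $\sum_{i \in I} x_{i,\hat{j}} = B$. Next, I would evaluate the total mass $\sum_{u \in I,\, v \in J} x_{u,v}$ in two ways, either as $\sum_{v \in J}\bigl(\sum_{u \in I} x_{u,v}\bigr) = |J| B$ or as $\sum_{u \in I}\bigl(\sum_{v \in J} x_{u,v}\bigr) = |I| C$; the compatibility assumption $|J| B = |I| C$ ensures these agree. Plugging back in yields
\[
\tilde{f}(x) - f(x) \;=\; -\Delta \cdot B \cdot |J| B \;=\; -\Delta\, |J|\, B^{2},
\]
which is independent of the choice of feasible $x$.

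Since $f$ and $\tilde{f}$ differ by this constant on the entire feasible region, we have $\tilde{f}(y) - \tilde{f}(z) = f(y) - f(z)$ for any two feasible $y, z$. In particular, if $y$ is better than $z$ under $f$, the same inequality holds under $\tilde{f}$, proving the ranking is preserved. The special case of the permutation constraints in the previous lemma is recovered by setting $B = C = 1$ and $|I| = |J| = n$.

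I do not anticipate a substantive obstacle here: the only non-routine step is recognizing that binarity of $x$ is never needed, since the computation relies solely on linearity and the two marginal constraints. The compatibility condition $|J|B = |I|C$ is used only to guarantee feasibility exists (and implicitly to make the two expressions for the total mass consistent), not in the algebra itself.
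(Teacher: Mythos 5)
Your proof is correct and follows essentially the same route as the paper's: both arguments reduce to showing that the perturbation contributes $-\Delta\bigl(\sum_{i\in I}x_{i,\hat{j}}\bigr)\bigl(\sum_{u\in I, v\in J}x_{u,v}\bigr)$, a constant on the feasible set, by factoring the sum and applying the marginal constraints (your $-\Delta|J|B^2$ equals the paper's $-\Delta BC|I|$ via $|J|B=|I|C$). Your observation that binarity of $x$ is never used is a nice extra remark but does not change the argument.
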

  
\begin{proof}
It suffices to show that the difference in the objective values of the two solutions $y$ and $z$ remain the same before and after scaling. More precisely, 
$\sum_{\substack{i,u \in I\\ v, j \in J}} z_{i,j}Q_{i,j,u,v}z_{u,v} -  \sum_{\substack{i,u \in I\\ v, j \in J}} y_{i,j}Q_{i,j,u,v}y_{u,v}=\sum_{\substack{i,u \in I\\ v, j \in J}} z_{i,j}\tilde{Q}_{i,j,u,v}z_{u,v} -  \sum_{\substack{i,u \in I\\ v, j \in J}} y_{i,j}\tilde{Q}_{i,j,u,v}y_{u,v}$.
 
By definition we have,
\begin{align*}
&\sum_{\substack{i,u \in I\\ j,v \in J} } z_{i,j}\tilde{Q}_{i,j,u,v} z_{u,v} - y_{i,j} \tilde{Q}_{i,j,u,v} y_{u,v} \\
&= \sum_{\substack{i,u \in I \\ j,v \in J}} [z_{i,j}Q_{i,j,u,v} z_{u,v} - y_{i,j}Q_{i,j,u,v} y_{u,v}] \\ \end{align*}
\begin{align}
&-\Delta \sum_{\substack{i,u \in I} ,v \in J} [z_{i,\hat{j}} z_{u,v} - y_{i,\hat{j}} y_{u,v}] \label{secondterm}
\end{align}

Since both solutions are feasible, i.e. 
$\sum_{i \in I} y_{i,j}=\sum_{i \in I} z_{i,j}=B$ and 
$\sum_{j \in J} y_{i,j}=\sum_{j \in J} z_{i,j}=C$,  

\begin{align*}
\sum_{\substack{i,u \in I \\ v \in J}} y_{i, \hat{j}}y_{u,v} &= \sum_{i \in I} y_{i ,\hat{j}} \sum_{u \in I} \sum_{v \in J} y_{u,v} \\
&= B \sum_{u \in I} C \\
&= BC|I|
\end{align*}
and similarly
$
\sum_{\substack{i,u \in I \\ v \in J}} z_{i, \hat{j}}z_{u,v}
= BC|I|
$

Hence the term $(\ref{secondterm})$ is equal to 0. 


\end{proof}

Specifically when $B=C=1$, and the index set $I=J=\{ 1, \ldots, n\}$, we have the special case for permutation based optimization problems which is Lemma $1$. 

Even though we have shown that we can change the objective value corresponding to certain indices by a constant and yet the two problems remain equivalent, it is interesting to determine the value to be scaled for a particular index $i$, i.e. $\Delta_i$. One approach is to use the result from \cite{wang2018distance} to minimize the variance of all the scaled distances. Intuitively, this corresponds to smoothing the landscape of the objective function.

\subsection{Parameter Tuning}

As discussed above, it is important to tune the parameter $A$ in Equation \ref{tune}  unless the search space  can be narrowed such that the values within it does not impact the objective function.


 For a general QS, there are several approaches for finding  suitable parameter values: 

 \textbf{Offline training:} One approach is to develop a a regression model use a multi-layer perceptron (MLP) that will predict a suitable weight parameter value for a given instance.  
 We first perform exploration by sampling the penalty coefficient values over a wide range and then sampling more values over a narrower range around the parameter values that have better performance. For TSP for example,  one can a neural network to predict $\frac{\text{Suitable weight}}{\text{Longest edge length}}$. We also encode our QUBO into common graph properties such as maximum and minimum edge weight, the minimal spanning tree, condition number of the graph to be used as the input features of the MLP.  Yet another approach of offline training is  \cite{huang2021qross} where we build  a surrogate model 
 of the QS via learning from solver data on a collection of problem instances, and using it to search for a suitable penalty parameter. For this manuscript, we have excluded offline training experimental results in the numerical section since the training process is very time consuming and similar performance can be attained by the online approaches (which we will discuss below).

\textbf{Online parameter search:} One approach is to perform an online search for suitable parameters. Bayesian approaches are model-based approaches that updates the search regions dynamically. Hyperopt \cite{bergstra2013hyperopt} and Optuna\cite{akiba2019optuna} are frequently used to improve the quality of the solution iteratively. Model-free approaches based on Particle Swarm Optimization (PSO) \cite{rini2011particle} are also able to achieve  reasonable results. Furthermore, while there are multiple hyper-parameters that can be set for PSO, interestingly we notice that the performance is not sensitive to them for our problems.

\textbf{Statistical sampling:} Another approach is to draw the parameter values based on some distributions.  For example, we can collect data to learn the average value and standard deviation of penalty parameter of the problem instance  that performs well and fit a distribution to it. During the prediction stage, we draw samples from the fitted distribution repeatedly.


The choice of which parameter tuning scheme to use depends largely on the applications, and in the experiments that follow, we compare the results of these methods. 


\subsection{Projection to the Feasible Space}

While ideally, QS should return feasible (though not optimal) solutions, this need not always be the case. In this paper, we propose a projection algorithm to map an infeasible solution to a feasible solution.

Suppose $z\in \{0,1\}^{n \times n}$  is an infeasible solution returned by QS. To restore feasibility of the original constrained problem, we solve the following optimization problem:

\begin{align*}&\min \sum_{i=1}^n\sum_{j=1}^n (x_{i,j}-z_{i,j})^2\\&= \min \sum_{i=1}^n\sum_{j=1}^n ((1-2z_{i,j})x_{i,j}+z_{i,j})\end{align*}

\noindent subject to $\sum_{i=1}^n x_{i,j}=1 , \forall j \in \{1,\ldots,n\}$ and $\sum_{j=1}^n x_{i,j}=1, \forall i \in \{1,\ldots,n\}$.

Note that here $z$ would be a given constant and hence this reduces to the standard Weighted Assignment Problem which can be solved in $O(\left(\frac{n}{k}\right)^3)$ time with the Hungarian algorithm \cite{jonker1987shortest}.

\subsection{Stitching}

Having solved the sub-problems, stitching involves combining the solutions obtained to construct a feasible solution to the original problem. 

A nice property of permutation-based problems is that any permutation of $\{1, \ldots, n\}$ is a feasible solution. Hence finding good feasible solution hinges on finding an effective scheme to stitch the solutions of the clusters together (assuming that we do not need to move items between the clusters).  
If the number of clusters $k$ is small, we can enumerate the configurations to find the best configuration. Otherwise, we can view the clusters themselves as items for the macro problem which we can apply our approach recursively, where the distance between the clusters would be problem-specific. For example, for TSP, we can define the distance between two clusters to be the smallest or largest distance of a city in the first cluster with a city in the second cluster.

In general, the cost of each sub-problem is a function of $\frac{n}{k}$, which is the size of each sub-problem. We have to trade-off between the speed of solving each sub-problem and the total time taken to stitch all the sub-problems together. The details of the complexity depends on the choice of QS and the stitching procedure.



\hspace{10mm}



\section{Applications}

Next, we will illustrate the application of our solution approach on the Euclidean Traveling Salesman Problem (E-TSP) and Flow Shop Scheduling Problem (FSP). 

\subsection{Application I: Euclidean Traveling Salesman Problem (E-TSP)}

While our framework works for arbitrary TSP instances, we focus on E-TSP due to the famous constrained $k$-means algorithm \cite{bradley2000constrained} can be applied easily.

In \cite{lucas2014ising}, a QUBO formulation that only involves a quadratic number of terms in the number of cities is proposed. Without loss of generality, we can focus on the case where the graph is fully connected, as we can always introduce edges of infinite distances otherwise. We let $d_{uv}$ be the distance between city $u$ and city $v$. We require $n^2$ variables for an $n$-city instance. The first subscript of $x$ represents the city and the second indicates the order that the city is going to be visited at. That is $x_{v,j}$ is the indicator variable that the city $v$ is the $j$-th city to be visited. Notice that the constraint implies that this satisfies the permutation condition.

The formulation is as follows: $\min_{x} H_B(x) + AH_A(x)$.

Here \begin{equation*}H_B(x)=\sum_{(u,v) \in E} d_{uv} \sum_{j=1}^n x_{u,j}x_{v,j+1}\end{equation*} describes the total distance travelled and 

\begin{equation}H_A = \sum_{v=1}^n \left( 1-\sum_{j=1}^n x_{v,j}\right)^2 + \sum_{j=1}^n \left( 1-\sum_{v=1}^n x_{v,j}\right)^2 \end{equation} describes the constraints to be a feasible cycle.

We are interested in focusing on decomposing our problems into instances of a certain size, hence as stated earlier in the clustering stage, we use the constrained $k$-means clustering algorithm \cite{bradley2000constrained}. The benefit of this approach is that we have better control over the size of the clusters suitable for QS. 

After clustering, we use the variance-minimizing distance scaling to change the objective to make the objective landscape smoother.

Let $k$ be the number of clusters and largest cluster size  be $|V_c|$. The stitching algorithm is outlined as follows: 

\begin{enumerate}
\item  We define the least cost flip value $\Delta_{ij}$ between all cluster pairs to be the least cost of performing 2-opt to stitch the two clusters $i$ and $j$ together. The time complexity is  $ O(k^2{|V_c|}^2)$.
\item To determine the ordering of stitching clusters, we solve a minimum cost Hamiltonian path problem by reusing our QUBO E-TSP model presented above (except it seeks a minimum Hamiltonian path \cite{lucas2014ising} instead of cycle) with $\Delta_{ij}$ on the edges.  
\item Finally, we construct the final solution for the original problem by stitching adjacent clusters derived from Step 2 via 2-opt. The stitching scheme is illustrated in Figure \ref{stitch}.
\begin{figure}[htbp]
\begin{center}
\includegraphics[width=8cm]{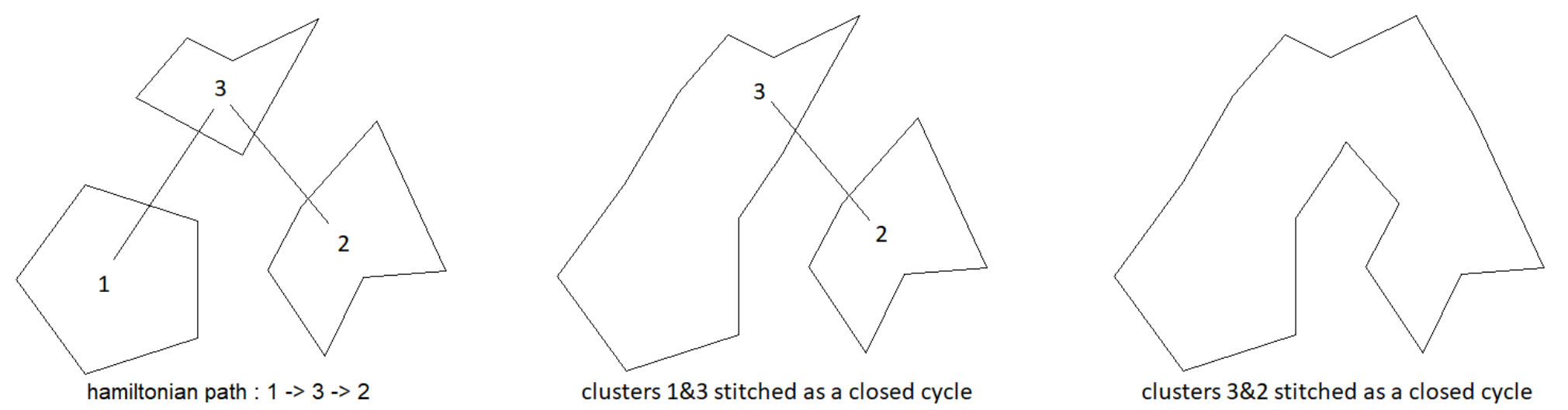}
\caption{Stitching procedure}\label{stitch}
\end{center}
\end{figure}

\end{enumerate}


\subsection{Application II: Flow Shop Scheduling Problem (FSP)}


Given $n$ jobs and $m$ machines, and each job $j, j\in \{1,2,\ldots,n\}$ has exactly $m$ operations where the $i^{th}$ operation is to be performed in order by machine $i$. In other words, each job $j$ is first processed by machine $1$, then machine $2$, and so on, until it completes its processing by machine $m$.  The time required for each job $j$ on machine $i$, $t_{ij}$ is given to us. The time at which job $j$ completes its last operation shall be denoted by $C_j$. Each machine can only work on a single operation at any point of time. The objective is to find a non-preemptive schedule that minimizes the maximum completion time of any job by obtaining the optimal permutation of the jobs.  We want to minimize $C_{\max} = \max_j C_j$, 
where $C_j$ denotes the time at which job $j$ completes its last operation $m$. Let the optimum value for a given instance be denoted as $C^*_{\max}$.

To use a QUBO solver to solve the permutation FSP, we apply the approximation proposed by Gupta\cite{gupta1968general} and has been studied by Stinson and Smith \cite{stinson1982heuristic}, Widmer and Hertz \cite{widmer1989new}, and Moccellin \cite{moccellin1995new}. 
In these papers, the original $n$ jobs FSP problem is transformed into a TSP problem of $n$ cities with the distance between two jobs defined by various formulation mostly involving the processing time of these two jobs.
Unlike the regular Euclidean TSP that we have discussed earlier, we no longer have the coordinate of the cities but we just have the distances and they need not be Euclidean. Once the TSP instance is solved, the solution obtained is converted to the solution of the original FSP.

Among all different transformation formulations, we choose the distance formulation named ``Sum of the Absolute Residuals with Negative Residuals Weighted Double - no carryover" in \cite{stinson1982heuristic} based on the experiment results. A different clustering approach is needed. Spectral clustering \cite{ng2002spectral} techniques make use of the spectrum (eigenvalues) of the similarity matrix of the data to perform dimensionality reduction before clustering in fewer dimensions. The similarity matrix is provided as an input and consists of a quantitative assessment of the relative similarity of each pair of points in the data set. We use spectral clustering to cluster the different jobs into clusters.

 To construct the similarity matrix from the distance matrix, we use the following conversion $Similarity_{i,j} = \exp \left( \frac{-Distance_{ij}^2}{2\delta^2}\right).$ $\delta$ here is the scaling factor which is chosen to be the average value of the distance matrix from the experiment results. After transformation, we could have all the elements in the similarity matrix to be in the range $[0,1]$ and a higher value suggests that two jobs are more similar to each other.

 After decomposition, the smaller scale problem will be solved using the same QUBO formulation mentioned previously. In other words, each smaller scale problem derived after the decomposition will be treated as an independent FSP and sent to QS simultaneously to be solved. After the solutions have been obtained from QS, a permutation of the sequence of each cluster will be conducted and the sequence with the minimum total makespan of the original problem is obtained by a brute-force search. Finally, for this problem,  we permute the group of the ordering of the job clusters to stitch the job together and select the best cluster permutation.

 Such scheduling problem in practice tend to be not too large,  we permute the group of the ordering of the job clusters to stitch the job together and select the best cluster permutation.

\section{Numerical Experiments}

In this section, we present experimental results that
\begin{enumerate}[label=(\Alph*)]
\item investigate the effect of data scaling on the quality of the solution; 
\item investigate the performance when we use a QS directly and do not use a hybrid framework.
\item investigate the performance when we use an exact solver, CPLEX directly and not using a QS; and
\item  investigate the performance when we apply our framework on E-TSP based on different parameter tuning approaches.
\item investigate the performance when we apply our framework on FSP based on different parameter tuning approaches.
\end{enumerate}

In the following we will present our numerical results based on the Fujitsu Digital Annealer that runs in the Parallel Tempering mode as the QUBO solver (QS).

We explain the experimental setup below. 

For clustering, we use the constrained $k$-means algorithm  \cite{bradley2000constrained} to control the size of each sub-QUBO.
The minimal cluster size, $\tau$, is set to be $7$ and the maximum cluster size, $\mu$ is fixed at $30$.

In terms of parameter tuning, for the online parameter search approach, we evaluate $40$ parameters in one run for each tuning method.
For the two Bayesian approaches Hyperopt and Optuna, we set $[0.5, 1] \cdot D_{max}$ as the range of searching space with 40 trials.
For the model-free PSO approach, the position of the particle represents the parameter to be tuned. The cost function of the PSO is set to be the objective function according to the application we are solving. Besides, we need to tune several hyperparameters for PSO including the inertia weight $\omega$, two learning factors $c_1$ and $c_2$. $\omega$ controls the contribution rate of the particle's previous velocity to the particle's velocity at the current time step. A combination of $c_1$ and $c_2$ determines the convergence rate by balancing the global and local search capabilities. Based on the experiment results, we set the inertia weight $\omega_{\max} = 0.5$, and $\omega_{\min} = 0.25$. The learning factor $c_{1,\max}=0.5$, $c_{1, \min}=0.25$, $c_{2, \max}=0.9$, and $c_{3, \max}=0.6$.
We update $\omega$, $c_1$, and $c_2$ according to the linear time varying formulation in \cite{785514}. In order to have total $40$ parameters evaluated, PSO runs $4$ iterations (including the random initialization) with number of particles equals to $10$. The search space are same as the Bayesian approaches. For the offline parameter search, we built a multi-layer perceptron, however, since the effect is similar to the online version, we do not present the result here.  

For the two statistical sampling approaches, we apply normal distribution and uniform distribution respectively. The normal distribution is $\mathcal{N}(\mu = 0.7594, \sigma^2 = 0.0141) \cdot D_{\max}$, where $D_{\max}$ is the largest magnitude of the distance matrix. $\mu$ and $\sigma$ are estimated by repeatedly calling the solvers and recording down parameter values that perform well. For uniform distribution, we sample from $\mathcal{U}(0.5, 1)\cdot D_{max}$ as the baseline method. Both the normal and uniform distributions are sampled $40$ times each.

\subsection{Effect of Data Scaling}

Data scaling reduces the ruggedness of the energy landscape. In Figure \ref{improve}, we illustrate that data scaling improves the performance of TSP in that the solutions obtained will be strictly better than those without perturbation.  Notice that the same data scaling scheme is used for FSP as well.

\begin{figure*}[!ht]
\includegraphics[width=\textwidth]{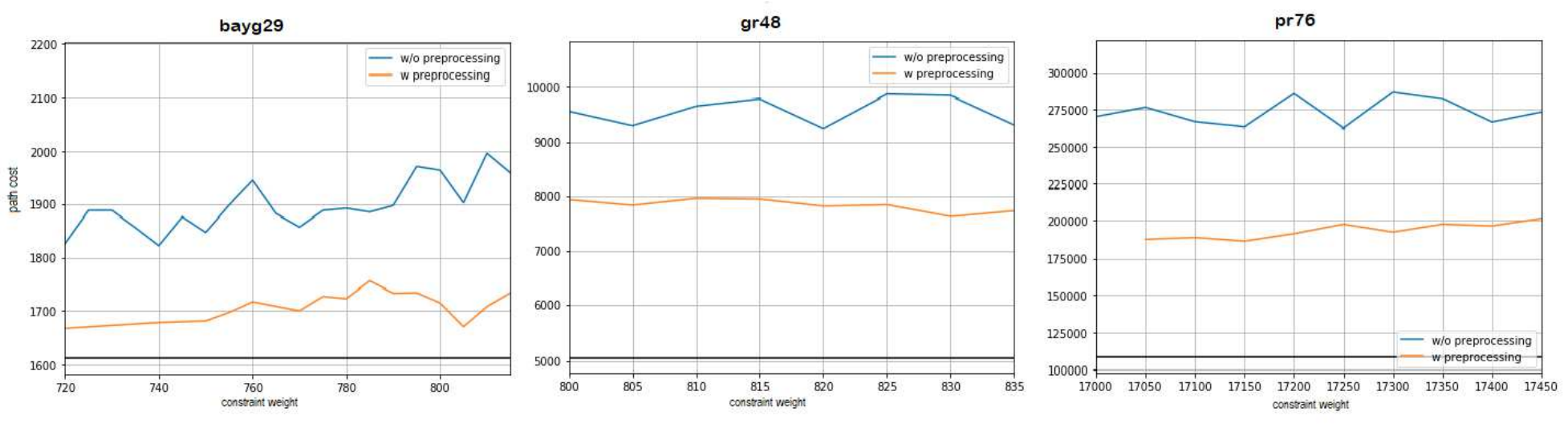}
\caption{Improvement by performing data scaling}\label{improve}
\end{figure*}

\subsection{Comparison with using QS Directly on Small Instances} 

Due to hardware limitations, QS may not be used directly when the problem instances are large, which entails a decomposition approach such as ours. That said, in this section, we will empirically evaluate the performance of our hybrid framework against directly solving small QUBO instances with QS directly.  

In Figure \ref{fig:directvs}, we illustrate the difference on solving TSP instances of up to $90$ cities from two data repositories. Using the same computational budget, we observe that the quality of solution of our approach is better: typically, our hybrid approach obtains an optimality gap that is half of the approach where we directly use QS. We show that when the instance size is more than $50$ cities for TSP, the optimality gap can be more than $35\%$ without using our framework (as it can fit into the hardware limitations of QS).  In contrast, the optimality gap for our framework is less than $10\%$.

\begin{figure}
\centering
\begin{subfigure}{0.45\textwidth}
  \centering
  \includegraphics[width=0.9\linewidth]{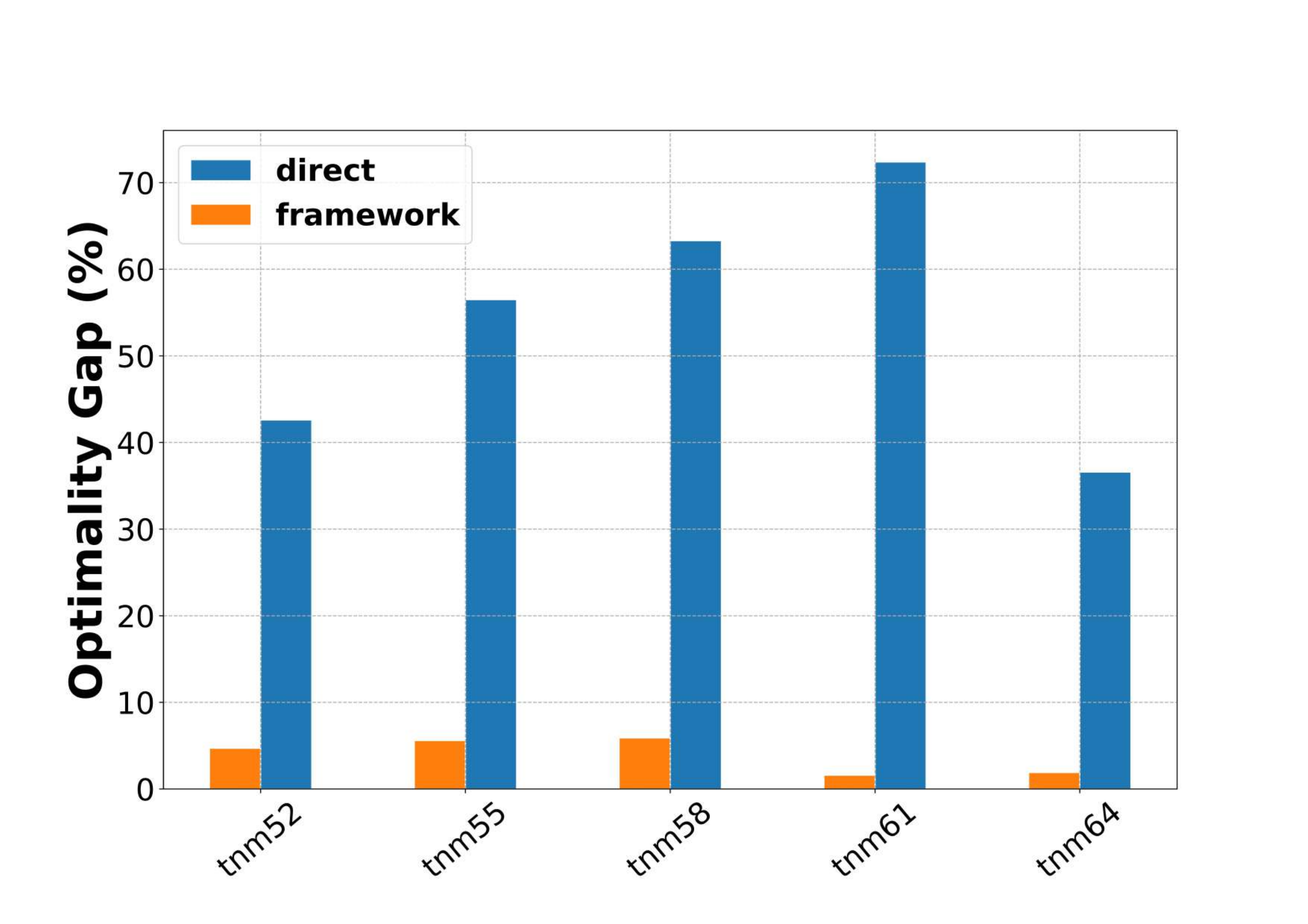}
  \caption{Tnm  Instances}
  \label{fig:sub1}
\end{subfigure}\qquad
\begin{subfigure}{0.45\textwidth}
  \centering
  \includegraphics[width=0.9\linewidth]{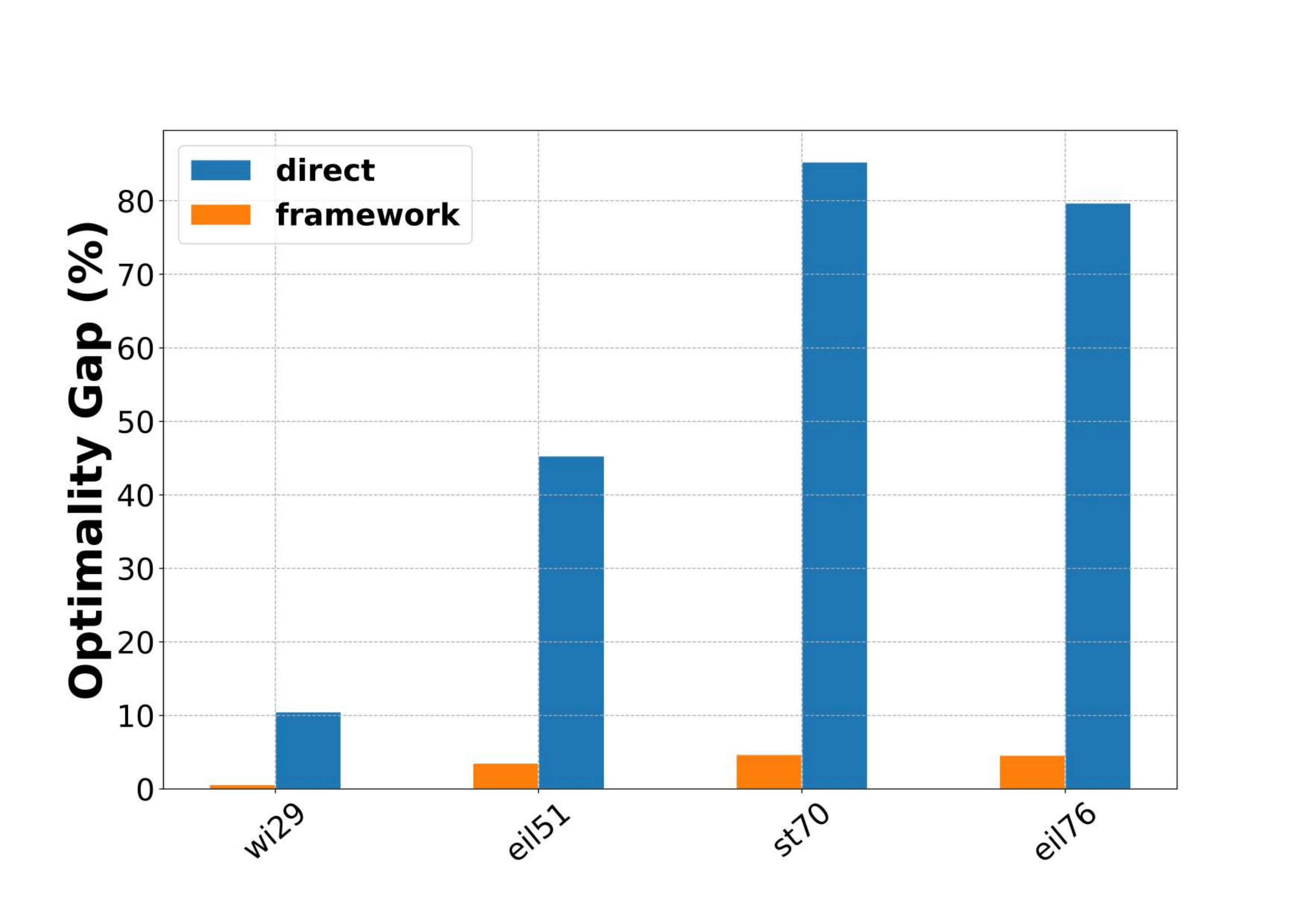}
  \caption{TSPLIB Instances}
  \label{fig:sub2}
\end{subfigure}
\caption{Comparison of performance if we use the maximum distance as the penalty parameter without the hybrid framework against using a trained MLP with the hybrid framework. }
\label{fig:directvs}
\end{figure}

\subsection{Comparison with Using an Exact Solver to Solve Sub-QUBO}

 In this paper, QS uses a meta-heuristic to solve small sub-QUBOs obtained from our decomposition framework.  It is interesting to study the performance if we use an exact solver such as CPLEX instead. 
 It is known that the required time for an exact solver can be exponentially long.  In order to compare the performance of these two solvers fairly,  we  consider two experiment settings.
 
 In the first setting, we solve each sub-QUBO model using CPLEX by setting the time limit to be $2$-minutes.  The result is as shown in Figure \ref{fig:comparecplexfirst}. The result shows that within the time limit,  $QS$ outperfoms the exact solver and the gap grows as the instance such increases.

 \begin{figure}
     \centering
     \includegraphics[width=7cm]{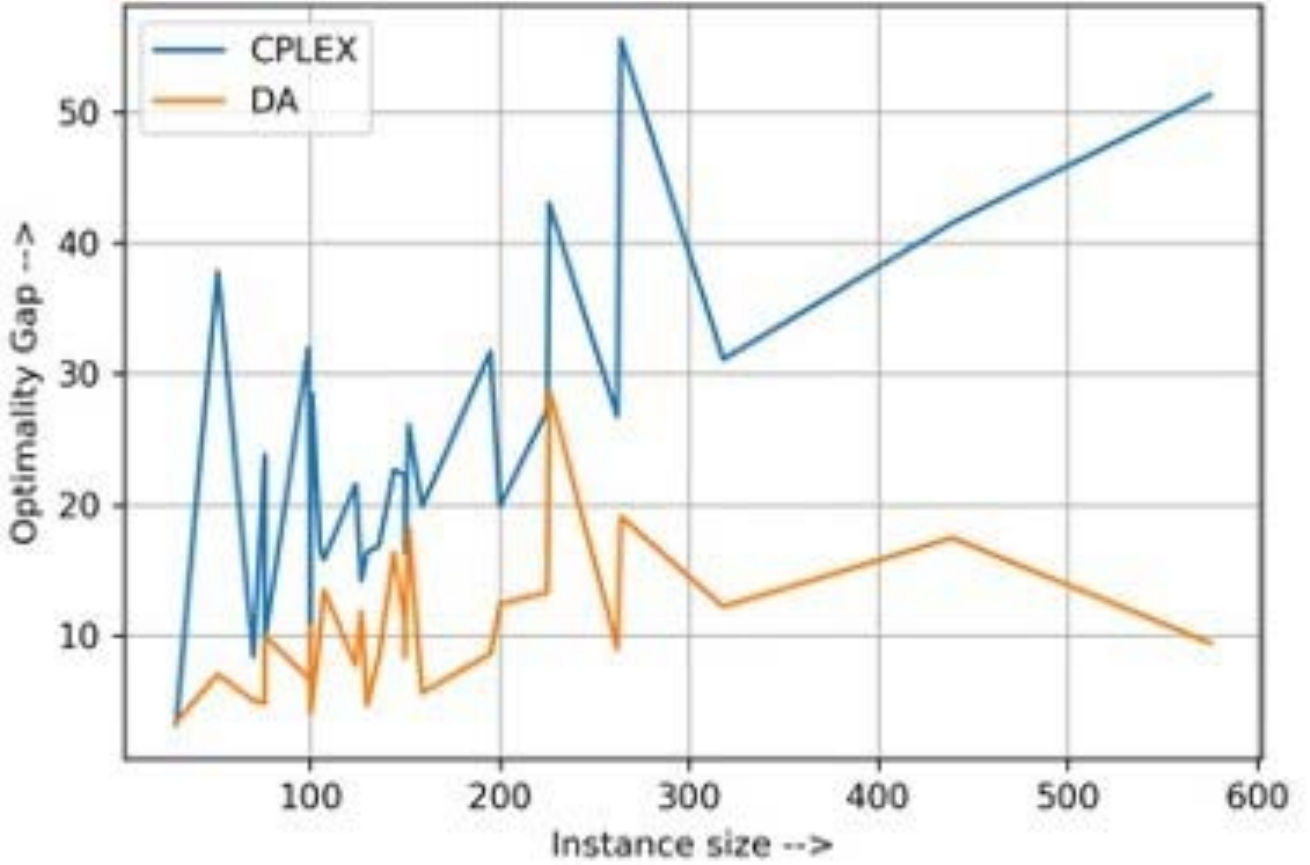}
     \caption{Comparison of run time CPLEX and QS without using optimality gap as an early termination condition but fixing run time.}
     \label{fig:comparecplexfirst}
 \end{figure}

In the second experiment setting, we aim to make the  optimality gap of the two solvers to be the same and examine the computational budget required to achieve the specified gap. For this purpose, after solving the E-TSP instances using QS, we note the optimality gap attained. The optimality gap can be derived, since the problem instances are obtained from a repository with known optimal solutions. We then set a maximum time limit of $5$ minutes in solving each sub-QUBO using CPLEX. Unlike the previous setting, another stopping condition that we use on CPLEX is that we set the required optimality gap for CPLEX to match the optimality gap obtained by QS. The result is as shown in Figure \ref{fig:comparecplex}. Again, QS attains a smaller optimality gap within a shorter run time.

\begin{figure}%
    \centering
    \subfloat[\centering Optimality Gap]{{\includegraphics[width=7cm]{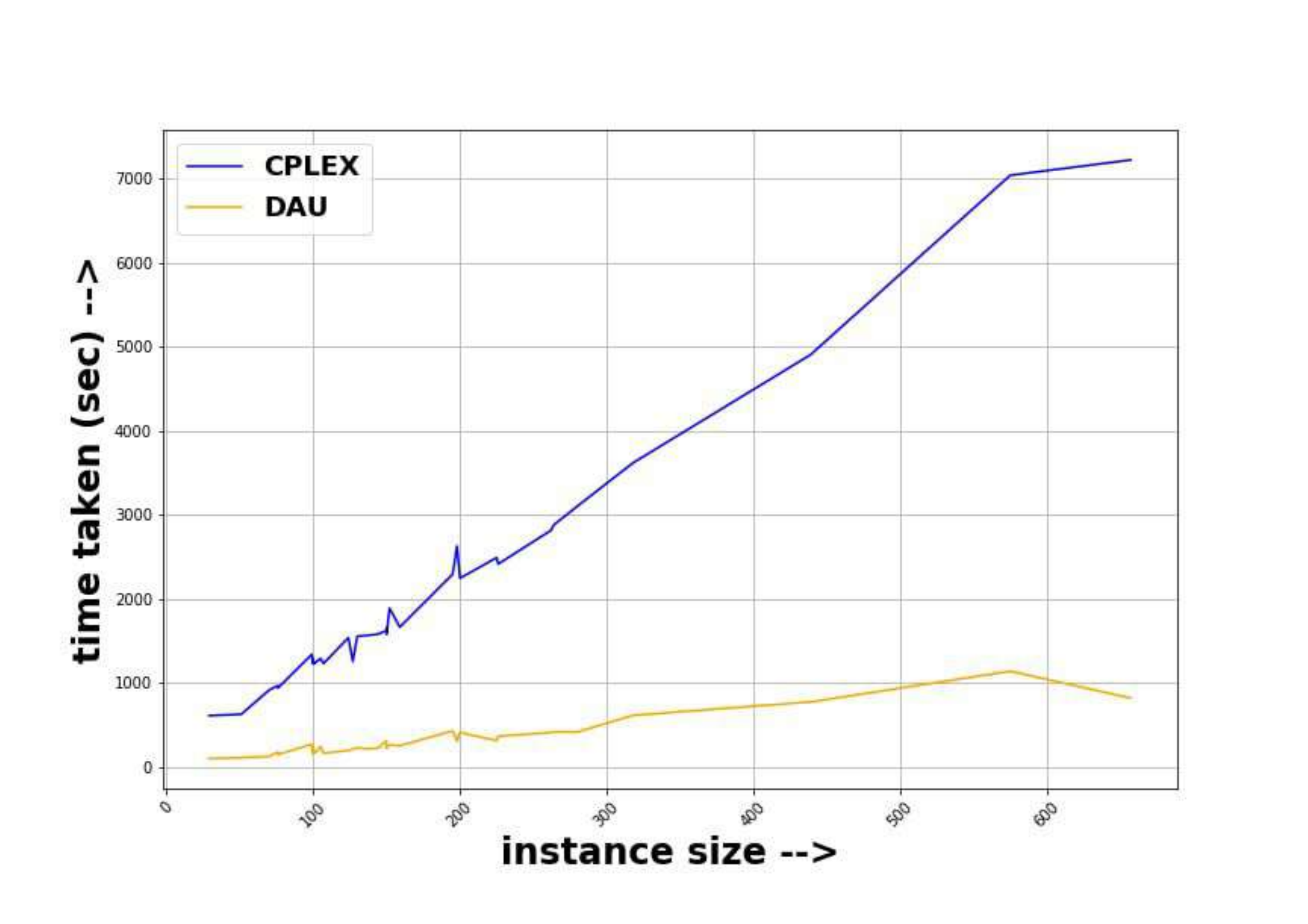} }}%
    \qquad
    \subfloat[\centering Run Time]{{\includegraphics[width=7cm]{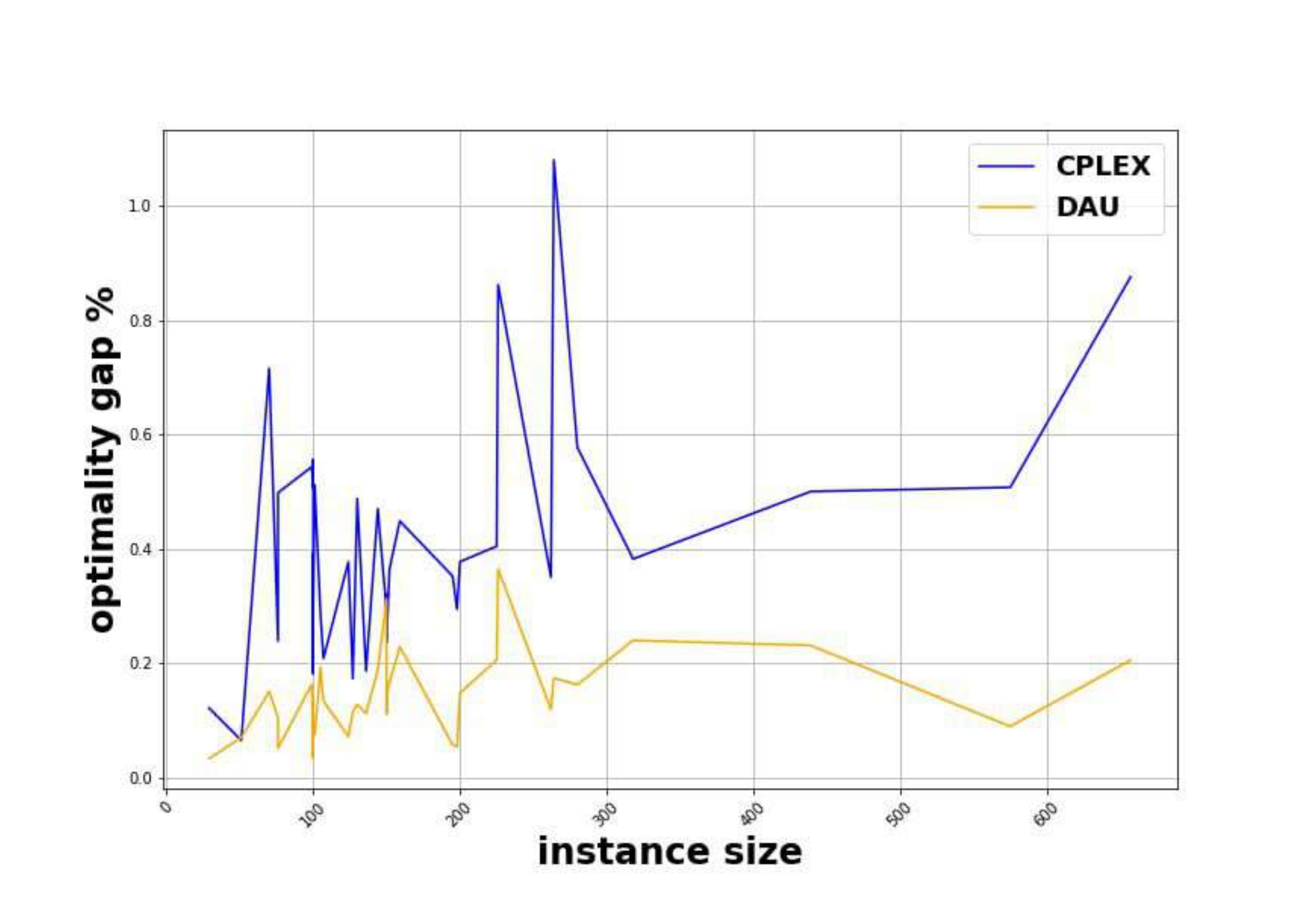}}}%
    \caption{Comparison of CPLEX and QS by setting CPLEX early termination condition to be  DA optimality gap.}%
    \label{fig:comparecplex}%
\end{figure}

In summary, we observe that QS performs better than CPLEX in solving the sub-QUBOs with either a fixed computational budget, or a fixed optimality gap. Arguably, with higher computational budget, CPLEX would outperform QS since it is an exact solver. 

\subsection{Results on Solving E-TSP}

\begin{figure}%
    \centering
    \subfloat[\centering Optimality Gap]{{\includegraphics[width=7cm]{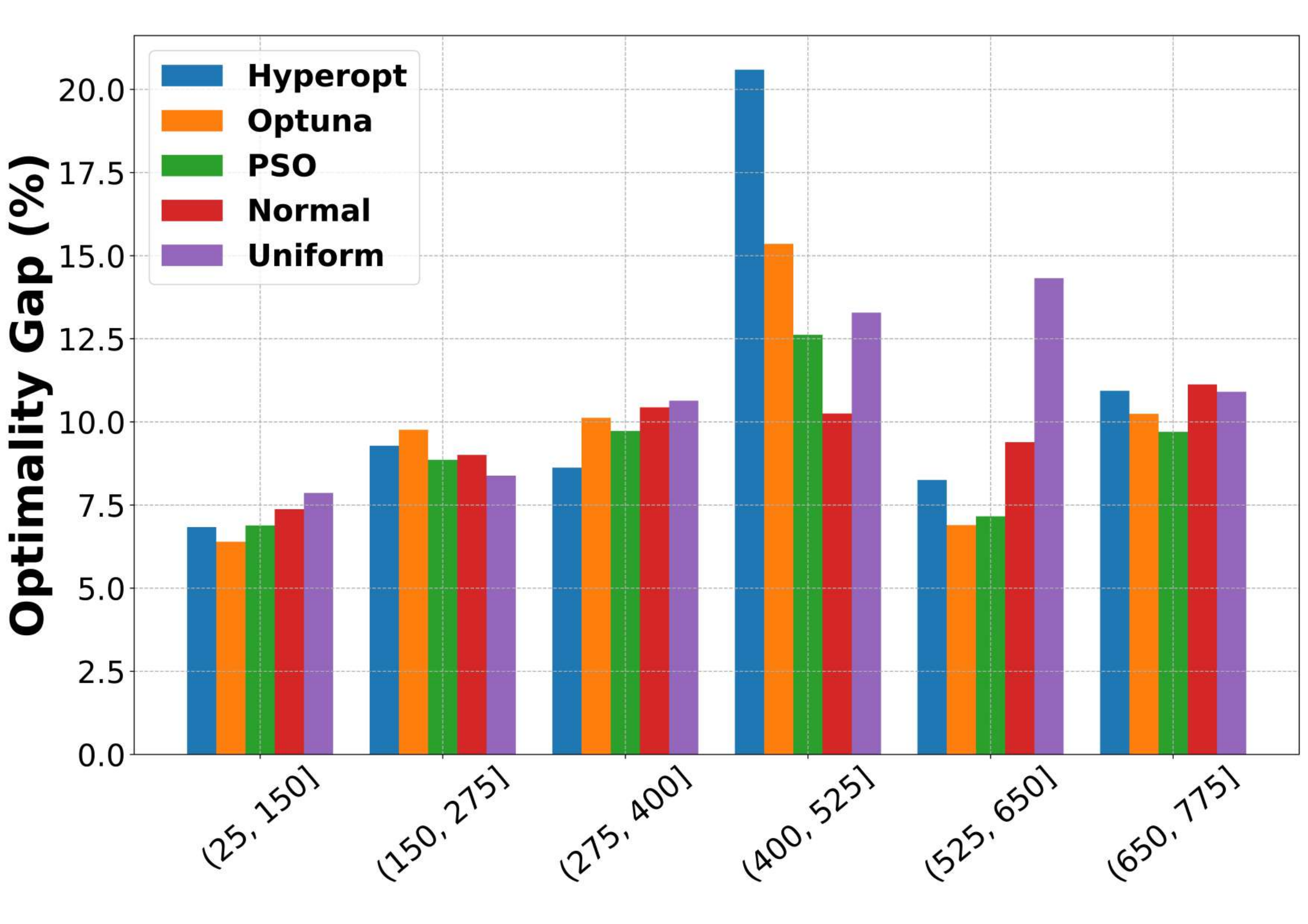} }}%
    \qquad
    \subfloat[\centering Tuning Time]{{\includegraphics[width=7cm]{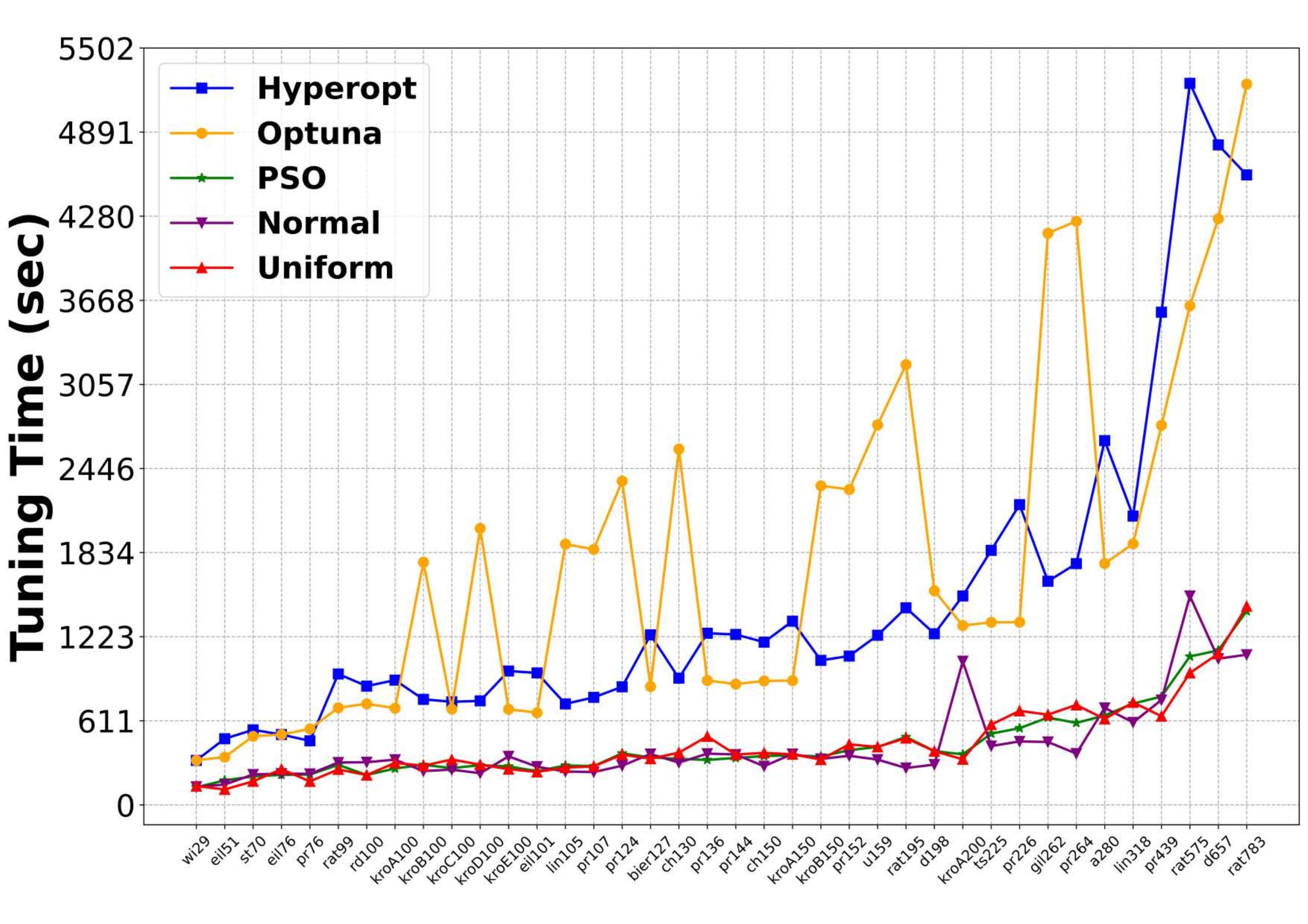}}}%
    \caption{Optimality gap and tuning time of TSP instances from TSPLIB}%
    \label{fig:tsplibplot}%
\end{figure}

\begin{figure}%
    \centering
    \subfloat[\centering Optimality Gap]{{\includegraphics[width=7cm]{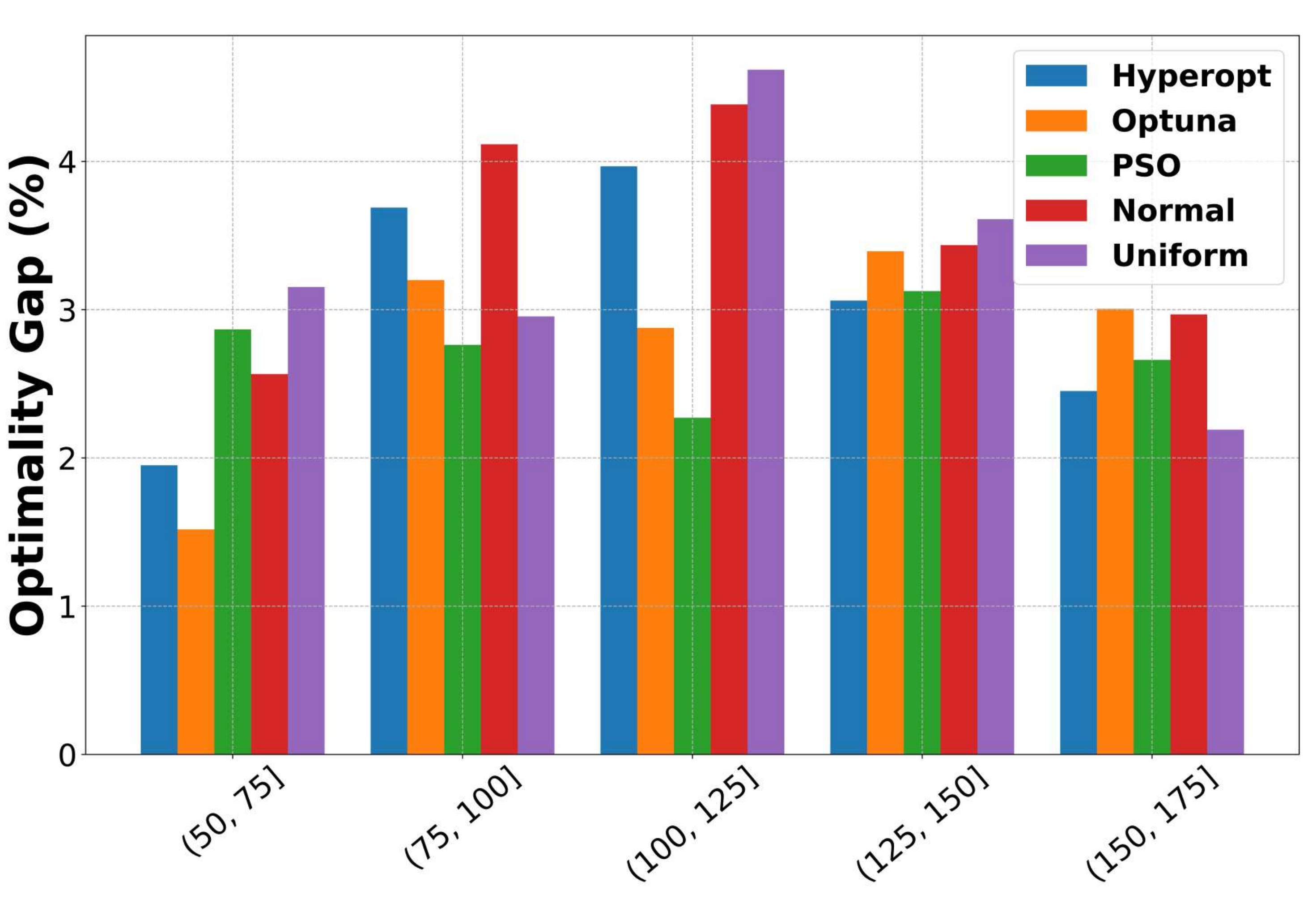} }}%
    \qquad
    \subfloat[\centering Tuning Time]{{\includegraphics[width=7cm]{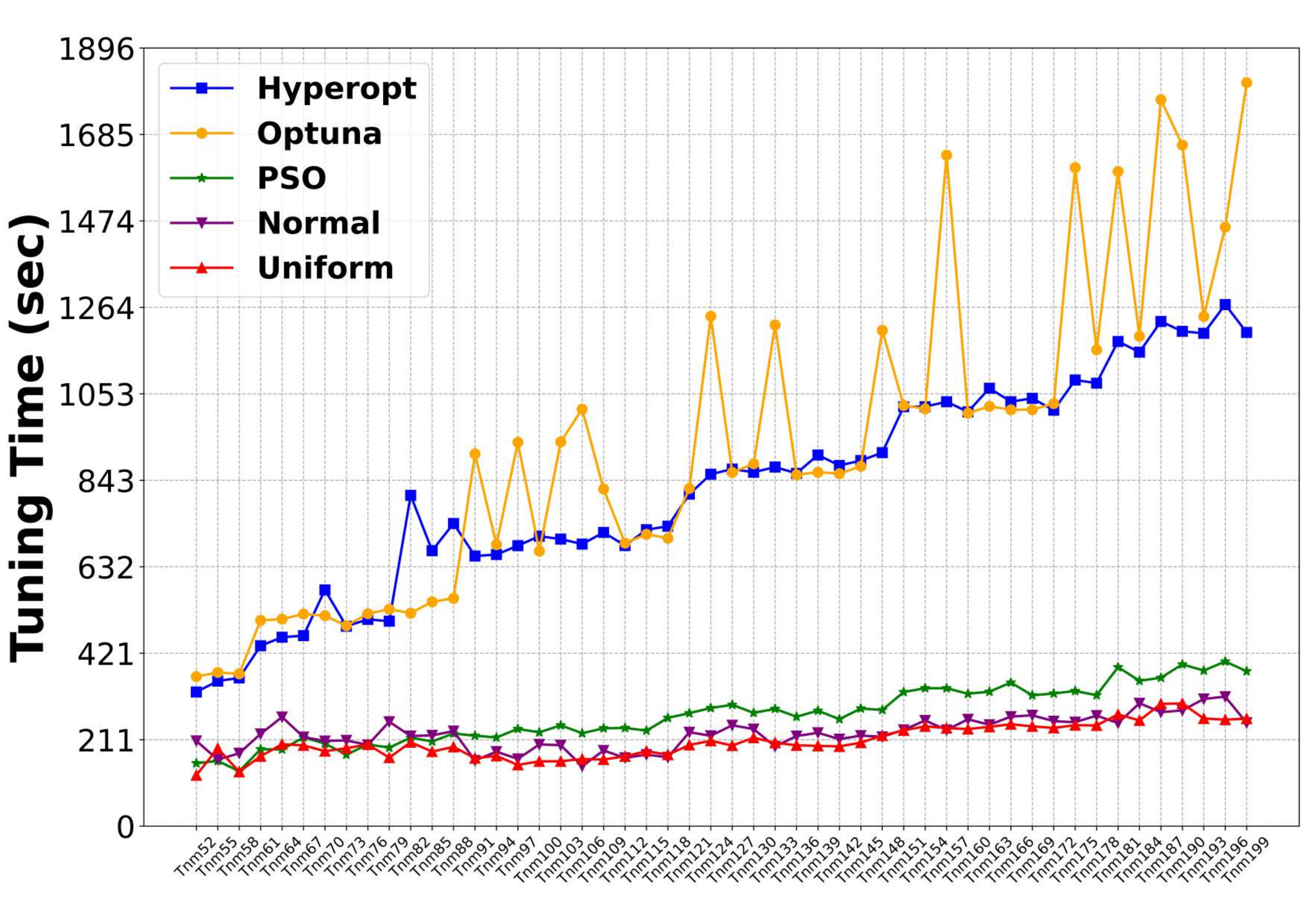}}}%
    \caption{Optimality gap and tuning time of TSP instances from \cite{hougardy2018hard}}%
    \label{fig:tnmplot}%
\end{figure}

We perform online parameter tuning on the TSP instances of size up to $800$ cities including some instances that are difficult for Concorde to solve. The data are obtained from TSPLIB and \cite{hougardy2018hard} respectively. 
Figure \ref{fig:tsplibplot} and Figure \ref{fig:tnmplot}  show the optimality gap and the tuning time when we use the five online parameter tuning schemes that we discussed  earlier. We grouped the instances according to their size in reporting the optimality gap.  The optimality gap can be obtained as our test problems are obtained from a repository where the optimal values of the instances have been reported. We observe that the optimality gap is comparable with each other for most instances, even though on average, Optuna performs the best. Interesting, on the hard E-TSP problem instances, we obtained optimality gap of at most $5\%$. 

For this application, it seems that tuning the penalty parameter uniformly with ratio between $0.5$ to $1$ of the maximum weight length suffices to obtain good results experimentally.  However, this does not imply that careful  tuning is not required. In general, a helpful principle we read from our experiments is that we need to first narrow the promising search space, from which we can perform a simple scheme such as uniform search to find the most appropriate values.

\subsection{Results on Solving FSP}

For FSP, the dataset is drawn from \cite{vallada2015new}. Due to the calculation of the distance between two jobs in \cite{stinson1982heuristic} involving manipulating the processing time of the two over all the machines, this approximation method cannot perform well for those instances with large number of machines. Though the dataset contains the instances with $n \in [10, 800]$ and $m \in [5, 60]$, we chose total 120 instances with $n \leq 60$ and $m \leq  10$ for our experiments.

We show the performance of various parameter tuning scheme in terms of optimality gap against the best known solution and runtime. We aggregate the optimality gap and tuning time given the size of the problem instance. Instances with same number of jobs and machines are grouped together.

We can see from  Figure \ref{fig:fsp} that the optimality gap is comparable with each other regardless the size of the instances. We can also see that for the instances with $m=10$, the optimality gap is significantly larger than those instances that contains only $5$ machines. On the other hand, the tuning time of different schemes are all increasing given the number of the jobs accordingly.

Again, the statistical sampling approach, leveraged over the parallelizability of the QS, is the fastest among all the schemes.

\begin{figure}%
    \centering
    \subfloat[\centering Optimality Gap]{{\includegraphics[width=7cm]{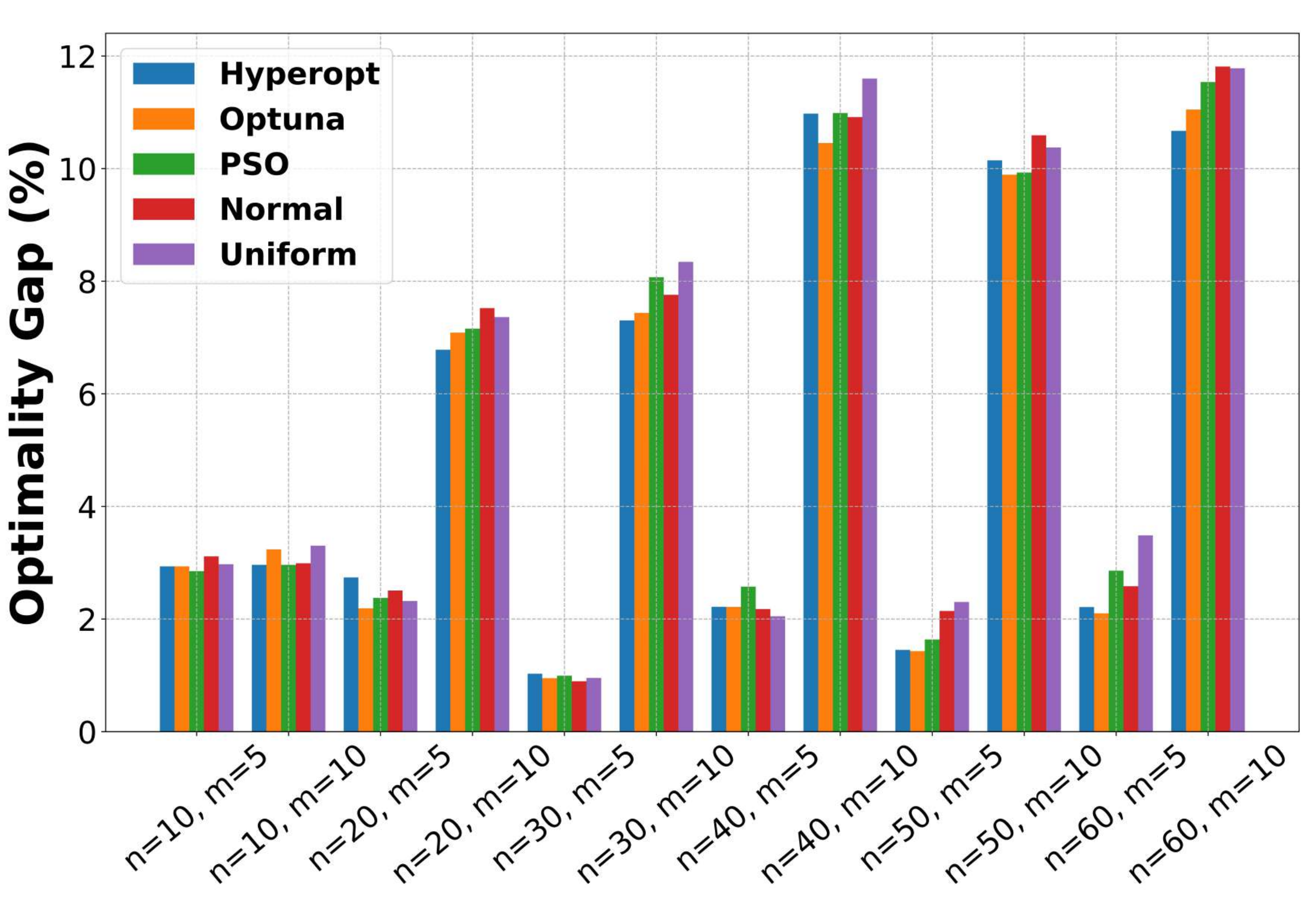} }}%
    \qquad
    \subfloat[\centering Tuning Time]{{\includegraphics[width=7cm]{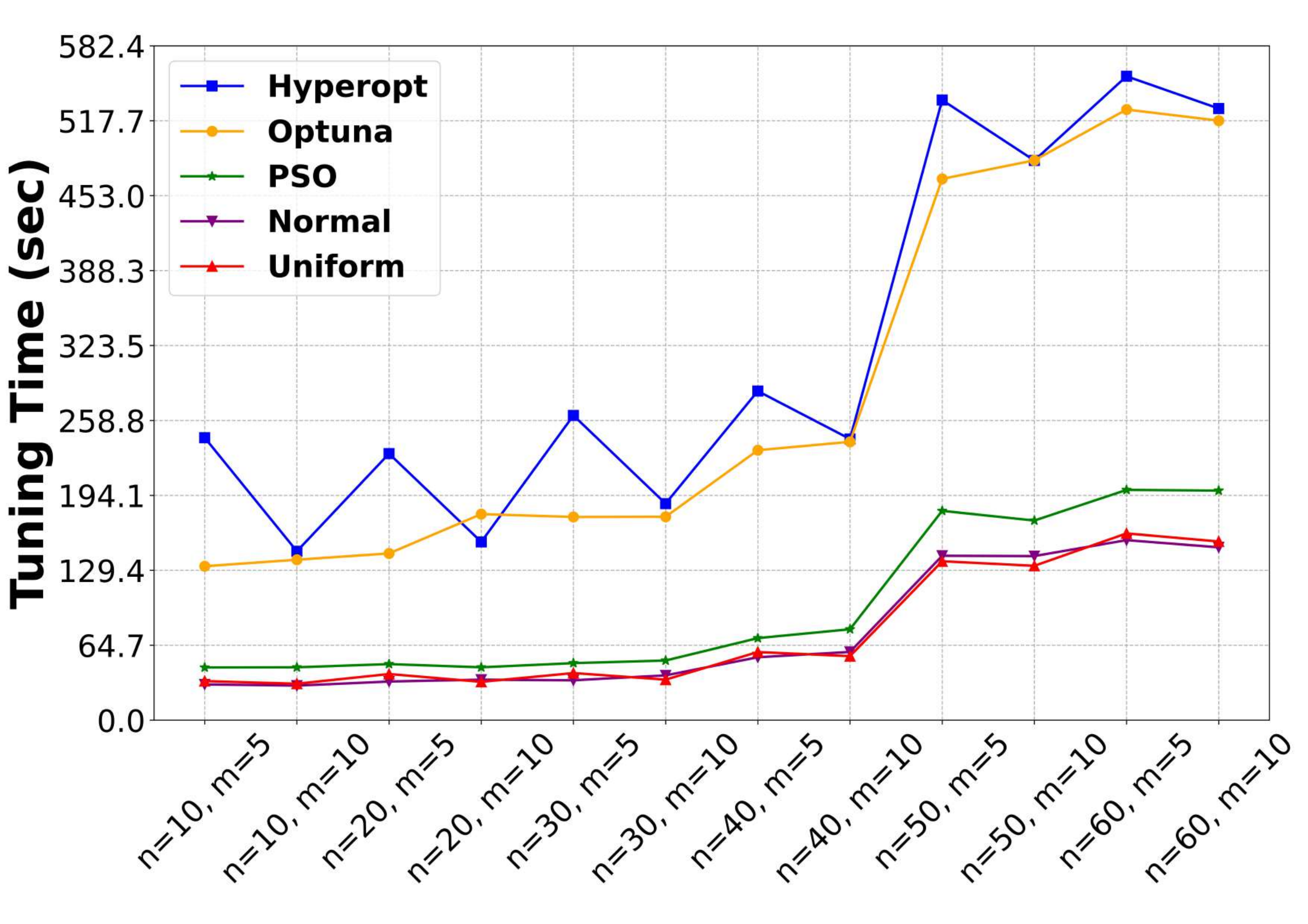}}}%
    \caption{Optimality gap and tuning time of FSP instances from \cite{vallada2015new}}%
    \label{fig:fsp}%
\end{figure}


\section{Conclusion}

In this paper, we proposed a framework aimed at exploiting a heuristic QUBO solver to solve large-scale permutation-based combinatorial optimization problems. This approach yields very small optimality gaps on E-TSP and FSP  benchmark instances as our experimental results showed. 

We have also illustrated that our framework performs better than applying a large problem to the QS directly even if the problem size can fit into the QS input size. This is due to as problem size increases, quality of QS tend to be worse due to multiple reasons such as  large search space.  We also numerically illustrated that QS can solve problem much better than an exact solver especially under the setting of limited time budget.



We conclude with a reflective remark. Our framework applies to other heuristic QUBO solvers and even quantum annealers. The framework would apply directly to a quantum hardware. The main differences are due to the absence of all-to-all connections, there could be additional tasks involving embedding the sub-problems to the quantum hardware.  When quantum annealers become more commercially viable, this framework could be utilized by a quantum annealer for solving large-scale combinatorial optimization problems that surpass computationally what exact solvers like CPLEX and Gurobi are capable of doing today.  



\newpage

\bibliographystyle{IEEEtran}
\bibliography{mybib.bib}
\end{document}